\newtheorem{theorem}{Theorem}[section]
\newtheorem{remark}{Remark}[section]
\newtheorem{corollary}[theorem]{Corollary}
\newtheorem{lemma}[theorem]{Lemma}
\newtheorem{fact}{Fact}
\newcommand{\weg}[1]{}
\renewcommand \a{\alpha}
\newcommand \la{\lambda}
\newcommand \br{\mathbb{R}}
\newcommand \Span{\operatorname{Span}}
\newcommand \cH{\mathcal{H}}
\newcommand \sK{\mathsf{K}}
\newcommand \sL{\mathsf{L}}
\title{ Killing tensors on reducible spaces}
\author{Vladimir S. Matveev}
\address{Fakult\"{a}t f\"{u}r Mathematik und Informatik, Friedrich-Schiller-Universit\"{a}t, 07737 Jena, Germany}
\email{vladimir.matveev@uni-jena.de}
\thanks{The first named author was partially supported by ARC Discovery Grant DP210100951 and by the DFG projects 455806247 and 529233771. The first named author is thankful to La Trobe University for hospitality.}
\author{Yuri Nikolayevsky}
\address{Department of Mathematical and Physical Sciences, La Trobe University, VIC 3086, Australia}
\email{Y.Nikolayevsky@latrobe.edu.au}
\thanks{The second named author was partially supported by ARC Discovery Grant DP210100951. The second named author is thankful to Friedrich-Schiller-Universit\"{a}t for hospitality.} %
\subjclass[2020]{37J35, 70H06, 	53C29, 	53C21}
\keywords{ Killing tensors, reducible metrics, polynomial in momenta integral}
\begin{document}

\begin{abstract}
We prove that on the product of two Riemannian manifolds one of which is compact, any Killing tensor is reducible, that is, is the sum of products of Killing tensors on the factors. The same is true for the lifts to the universal cover of Killing tensors on a compact manifold with reducible holonomy. We give a local description of Killing tensors on product manifolds and present an example of a complete product manifold whose factors are locally irreducible which admits an irreducible Killing tensor field.
\end{abstract}


\maketitle

\section{Introduction} \label{s:intro}

Let $(M,g) $ be a connected Riemannian manifold. We consider its geodesic flow which we view as the Hamiltonian system on the cotangent bundle $T^*M$ endowed with the standard Poisson structure $\{ \ , \ \}$,
generated by the Hamiltonian $\cH= \tfrac{1}{2}g^{ij}p_ip_j$.
We say that a function $F: T^*M \to \mathbb{R}$ is \emph{an integral} for the geodesic flow of $g$, if $\{ \cH, F\}=0$. Geometrically, the condition $\{ \cH, F\}=0$ means that the function $F$ is constant on every trajectory of the Hamiltonian system.
We say that the integral $F$ is \emph{polynomial in momenta}, if the restriction of $F$ to every cotangent space $T_x^*M$
is a polynomial (of course, the coefficients of the polynomial $F$ may depend on the position $x$). It is well known and is easy to see that if $F=F_0+ F_1 +...+ F_d $ is an integral polynomial in momenta, where $F_i$ denotes the homogeneous polynomial in momenta of degree $i$, then every $F_i, \, i=0, \dots, d$, is also an integral.

In differential-geometric language, homogeneous polynomial in momenta integrals are essentially the same as Killing tensors. Namely, for a function $F_d$ which is a homogeneous polynomial of degree $d$ in momenta, define the symmetric $(0,d)$-tensor $K_{i_1\dots i_d}$ such that $F_d= K^{i_1\dots i_d}p_{i_1}\dots p_{i_d}$ (where here and below we use the metric $g$ for raising and lowering the indices).
It is known that $F_d$ is an integral if and only if $K$ is \emph{a Killing tensor}, that is, if it satisfies \emph{the Killing equation}
\begin{equation*}
 K_{(i_1\dots i_d,j)}=0,
\end{equation*}
where the comma denotes the covariant differentiation with respect to the Levi-Civita connection of $g$, and the parentheses denote the symmetrisation by all the indices. 

The main results of our paper are Theorems~\ref{thm:2} and \ref{thm:3} which we state now, and Theorem~\ref{t:decompo} for which we need a little preparation. For a Riemannian manifold $(M,g)$, denote $\sK^d(M), \, d \ge 0$, the space of integrals which are homogeneous polynomial in momenta of degree $d$. This space is finite dimensional (for a stronger fact see Lemma~\ref{l:findim} below).

\begin{theorem} \label{thm:2}
 Let $(M_1, g_1)$ be a compact connected Riemannian manifold of class $C^2$ of dimension $m_1 \ge 1$, and let $(M_2, g_2)$ be a connected \emph{(}not necessarily complete\emph{)} Riemannian manifold of class $C^2$ of dimension $m_2 \ge 1$. We consider the Riemannian product $(\overline{M}, \overline{g})=(M_1 \times M_2, g_1 +g_2)$.

 Then any $F \in \sK^d(\overline{M})$ is a finite sum of the products of the form
 \begin{equation} \label{eq:1}
 F_1^{d-\ell} F_2^{\ell} \ ,
 \end{equation}
 where $F_1^{d-\ell} \in \sK^{d-\ell}(M_1)$ and $F_2^{\ell} \in \sK^{\ell}(M_2)$ for some $0 \le \ell \le d$.
\end{theorem}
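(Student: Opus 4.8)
The plan is to work on the cotangent bundle $T^*\overline{M}=T^*M_1\oplus T^*M_2$, writing the momenta as $(q,r)$ with $q$ along $M_1$ and $r$ along $M_2$, so that $\cH=\cH_1+\cH_2$ with $\cH_1=\tfrac12 g_1^{ab}q_aq_b$ and $\cH_2=\tfrac12 g_2^{\alpha\beta}r_\alpha r_\beta$. I assign to a function the \emph{bidegree} $(i,j)$ if it is homogeneous of degree $i$ in $q$ and $j$ in $r$, and I abbreviate $D_1=\{\cH_1,\cdot\}$ and $D_2=\{\cH_2,\cdot\}$. Since $\cH_1$ and $\cH_2$ involve disjoint pairs of canonical variables, $\{\cH_1,\cH_2\}=0$, whence $D_1D_2=D_2D_1$ by the Jacobi identity; moreover $D_1$ raises the bidegree by $(1,0)$ and $D_2$ by $(0,1)$. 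Decomposing $F=\sum_{\ell=0}^d F^{(\ell)}$ into its bidegree-$(d-\ell,\ell)$ components and collecting terms of equal bidegree in $\{\cH,F\}=0$ yields the chain
\begin{equation*}
 D_1F^{(0)}=0,\qquad D_1F^{(\ell)}=-\,D_2F^{(\ell-1)}\ (1\le\ell\le d),\qquad D_2F^{(d)}=0 .
\end{equation*}
Because a product $F_1^{d-\ell}F_2^{\ell}$ as in~\eqref{eq:1} has pure bidegree $(d-\ell,\ell)$, it suffices to show that each $F^{(\ell)}$ is a finite sum of such products.

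The heart of the argument is a lemma that uses compactness of $M_1$: \emph{a homogeneous-in-momenta function $Q$ on $T^*M_1$ which lies in the image of $D_1$ and satisfies $D_1Q=0$ must vanish.} I would prove this by integrating over the unit cotangent bundle $S^*M_1$, which carries a finite Liouville measure since $M_1$ is compact and which is preserved by the geodesic flow; thus $D_1$ is skew-adjoint on $L^2(S^*M_1)$, and if $Q=D_1P$ with $D_1Q=0$ then $\int_{S^*M_1}Q^2=\int_{S^*M_1}Q\,D_1P=-\int_{S^*M_1}(D_1Q)\,P=0$, forcing $Q\equiv0$ (first on $S^*M_1$, then everywhere by homogeneity). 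Applying this with a parametric dependence on the frozen $M_2$-variables $(y,r)$, I run an induction on $\ell$: assuming $D_1F^{(\ell-1)}=0$ (the base case being the first relation of the chain), the function $Q:=D_2F^{(\ell-1)}$ satisfies $D_1Q=D_2D_1F^{(\ell-1)}=0$ by commutativity, while $Q=-D_1F^{(\ell)}$ lies in the image of $D_1$; the lemma gives $Q=0$, i.e. $D_1F^{(\ell)}=0$. Thus $D_1F^{(\ell)}=0$ for all $\ell$, and then the chain forces $D_2F^{(\ell)}=0$ for all $\ell$ as well.

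It remains to separate variables. For fixed $(y,r)$ the function $F^{(\ell)}(\cdot,y,\cdot,r)$ is then a degree-$(d-\ell)$ element of the finite-dimensional space $\sK^{d-\ell}(M_1)$; choosing a basis $A_1,\dots,A_N$ of $\sK^{d-\ell}(M_1)$ together with dual linear functionals $\phi_k$ (realised as finite combinations of evaluations at points of $T^*M_1$), I set $c_k(y,r):=\phi_k\big(F^{(\ell)}(\cdot,y,\cdot,r)\big)$, so that $F^{(\ell)}=\sum_k A_k\,c_k$. Each $c_k$ is a homogeneous polynomial of degree $\ell$ in $r$, and from $0=D_2F^{(\ell)}=\sum_k A_k\,(D_2c_k)$ together with the linear independence of the $A_k$ in the $(x,q)$-variables I obtain $D_2c_k=0$, i.e. $c_k\in\sK^{\ell}(M_2)$. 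Hence $F^{(\ell)}=\sum_k A_k c_k$ is of the required form, and summing over $\ell$ proves the theorem.

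I expect the main obstacle to be the compactness lemma, both because it is the only place where compactness of $M_1$ enters — it is precisely the statement that no nonzero Killing tensor on a compact factor is ``exact'' with respect to its geodesic flow — and because making the integration by parts and the iterated application of $D_1,D_2$ rigorous requires knowing that $F$, and hence each $F^{(\ell)}$, is of class $C^2$, a regularity fact that must be secured beforehand given that the metrics are only assumed $C^2$.
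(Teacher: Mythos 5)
Your proposal is correct, but it reaches Theorem~\ref{thm:2} by a genuinely different route than the paper. Your bidegree decomposition and the resulting chain of relations are, up to relabelling, exactly the paper's decomposition~\eqref{eq:KonMbar} and system~\eqref{eq:hompq} --- but in the paper these appear in the proof of the general structure result, Theorem~\ref{t:decompo}, not of Theorem~\ref{thm:2}. The paper proves Theorem~\ref{thm:2} instead by restricting $\overline{F}$ to totally geodesic cylinders (a geodesic times an interval) and invoking the classification of polynomial integrals of the Euclidean plane, $F=P(p_1,p_2,t_1p_2-t_2p_1)$, where boundedness kills the dependence on the angular-momentum variable (Lemma~\ref{lem:1}); this is packaged into Lemma~\ref{lem:2} and Corollary~\ref{cor:2}, whose hypotheses (completeness of $M_1$ plus boundedness of $\overline{F}$ on unit spheres) are deliberately weaker than compactness, so that Theorem~\ref{thm:3} follows from the same corollary at no extra cost. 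Your replacement for all of this is the compactness lemma $\operatorname{image}(D_1)\cap\ker(D_1)=0$ for homogeneous polynomials on $T^*M_1$, proved by skew-adjointness of $D_1$ with respect to the flow-invariant Liouville measure on the compact unit cotangent bundle; this is correct, and in the paper's language it says precisely that over a compact factor the Jordan blocks of $H_1$ appearing in the proof of Theorem~\ref{t:decompo} are trivial, i.e.\ the spaces $\sL^{d}_{k}$ collapse to $\sK^{d}$ where needed. Your final separation-of-variables step (basis of the finite-dimensional $\sK^{d-\ell}(M_1)$, evaluation functionals, linear independence forcing the coefficients to Poisson-commute with $\cH_2$) coincides with the paper's argument at the end of Corollary~\ref{cor:2}, including the appeal to \cite{KM} for finite-dimensionality in the $C^2$ category. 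One remark that strengthens your argument against your own flagged obstacle: the $L^2$ integration by parts can be avoided entirely. Since $D_1Q=0$ and $Q=D_1P$, one has $P(\Phi_t\xi)=P(\xi)+tQ(\xi)$ along the geodesic flow, which is complete on the compact $S^*M_1$; boundedness of the continuous function $P$ on $S^*M_1$ then forces $Q=0$ pointwise, requiring only continuity of $Q$ and differentiability of $P$ along orbits --- essentially the same ``linear growth is impossible'' mechanism that the paper exploits in Lemma~\ref{lem:1}. In summary: the paper's route buys the stronger boundedness-based statements and hence Theorem~\ref{thm:3} simultaneously; yours buys a self-contained dynamical argument that avoids the flat two-dimensional classification, at the cost of using compactness of $M_1$ (finite invariant measure or bounded orbits) in an essential way that would not extend to the setting of Theorem~\ref{thm:3} without modification.
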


\begin{theorem} \label{thm:3}
 Let $(M, g)$ be a compact, connected Riemannian manifold of class $C^2$ of dimension $m \ge 2$ whose holonomy group is reducible. We isometrically identify the universal cover $\overline{M}$ of $M$ with the Riemannian product of two \emph{(}complete, connected\emph{)} Riemannian manifolds which we denote $(M_1, g_1)$ and $(M_2, g_2)$, respectively.

 Then the lift of any $F \in \sK^d(M)$ is a finite sum of the products of the form~\eqref{eq:1}.
\end{theorem}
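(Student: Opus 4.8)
The plan is to lift $F$ to the universal cover, exploit the fact that the reducible--holonomy splitting is a \emph{parallel} structure coming from $M$ itself, and then run the analysis underlying Theorem~\ref{thm:2} with the compactness of a single factor replaced by the cocompactness of the deck group action. First I would let $\Gamma=\pi_1(M)$ act on $\overline M=M_1\times M_2$ by deck transformations and set $\overline F$ to be the lift of $F$; by construction $\overline F\in\sK^d(\overline M)$ is $\Gamma$-invariant. The holonomy reducibility produces a parallel orthogonal splitting $TM=V_1\oplus V_2$ defined on $M$, whose lift is the product splitting; since $g$ is parallel it splits as $g=g_1+g_2$ with each $g_i$ parallel, and correspondingly $\cH=\cH_1+\cH_2$ with $\cH_i=\tfrac12 g_i^{jk}p_jp_k$ depending only on the phase space of $M_i$. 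Because the splitting descends from $M$, every deck transformation preserves it, so each $\gamma\in\Gamma$ acts as a product isometry $(\gamma_1,\gamma_2)$. Consequently the bidegree decomposition $\overline F=\sum_{\ell=0}^d \overline F^{(\ell)}$, where $\overline F^{(\ell)}$ is homogeneous of degree $d-\ell$ in the momenta of $M_1$ and degree $\ell$ in those of $M_2$, is itself $\Gamma$-invariant; in particular each $\overline F^{(\ell)}$ descends to $M$ and is therefore bounded on $\overline M$.

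Next I would extract the differential system satisfied by the pieces. Since $\{\cH_i,\cdot\}$ raises the $M_i$-momentum degree by one and preserves the other, the identity $\{\cH,\overline F\}=0$ decouples into the triangular system
\[
\{\cH_1,\overline F^{(\ell)}\}+\{\cH_2,\overline F^{(\ell-1)}\}=0,\qquad \ell=0,\dots,d+1,
\]
with the convention $\overline F^{(-1)}=\overline F^{(d+1)}=0$. The two extreme equations read $\{\cH_1,\overline F^{(0)}\}=0$ and $\{\cH_2,\overline F^{(d)}\}=0$, which say that $\overline F^{(0)}$ is, for each frozen point of $M_2$, a Killing tensor on $M_1$, and symmetrically for $\overline F^{(d)}$ on $M_2$; the intermediate pieces are coupled.

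The heart of the argument is to undo this coupling and exhibit $\overline F$ as a finite sum of products $F_1^{d-\ell}F_2^{\ell}$ with $F_1^{d-\ell}\in\sK^{d-\ell}(M_1)$ and $F_2^{\ell}\in\sK^{\ell}(M_2)$. I would argue by induction on the number of nonzero bidegree levels, at each step peeling off the extreme level: using boundedness one shows that the family $\overline F^{(0)}$, a priori a Killing tensor on $M_1$ parametrised by $M_2$, is in fact independent of the $M_2$-point, hence a genuine element $F_1^{d}\in\sK^{d}(M_1)$ (its $M_2$-coefficient lying in $\sK^{0}(M_2)$, i.e. constant); subtracting this product removes the top level, and the remainder closes up one level lower after solving an auxiliary equation $\{\cH_1,u\}=(\text{known lower term})$. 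In the proof of Theorem~\ref{thm:2} both the constancy of the extreme family and the solvability of the auxiliary equation come from integrating over the compact factor $M_1$. Here I would replace this by integration over the compact quotient $M$, which is legitimate because every object in sight is $\Gamma$-invariant and descends to $M$; the finite dimensionality of $\sK^{\bullet}$ on the compact base then guarantees that the families produced are finitely generated and that the induction terminates.

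The main obstacle is precisely that $\Gamma$, although acting by product isometries, need not be virtually a direct product $\Gamma_1\times\Gamma_2$: already for a flat torus with a generic skew lattice the projections of $\Gamma$ to $\mathrm{Isom}(M_1)$ and $\mathrm{Isom}(M_2)$ are nondiscrete, so neither factor acquires a cocompact quotient and Theorem~\ref{thm:2} cannot be invoked as a black box by passing to a product cover. One must therefore resolve the coupled triangular system using only cocompactness of the diagonal action. The decisive mechanism is boundedness: a $\Gamma$-invariant Killing tensor cannot carry the unbounded, non-reducible contributions (the analogue of the angular-momentum terms on $\br^2$, which shift under translations and so never survive on the torus), and it is this boundedness, supplied by compactness of $M$, that forces the integration constants either to vanish or to assemble into honest Killing tensors on $M_1$ and $M_2$. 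Making this quantitative --- controlling the growth of the potentials $u$ along the noncompact directions and closing the induction --- is where the real work lies.
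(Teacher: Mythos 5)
Your setup is sound as far as it goes --- lifting $F$, observing that compactness of $M$ makes $\overline F$ bounded on the unit cotangent bundle of $\overline M$, and splitting $\{\overline{\cH},\overline F\}=0$ into the triangular bidegree system (which is exactly the system~\eqref{eq:hompq} used in the paper for Theorem~\ref{t:decompo}). But the proof has a genuine gap at its crux, and you say so yourself: ``making this quantitative \dots\ is where the real work lies.'' The step where boundedness forces the coupling terms to die is precisely the content of the theorem, and your proposed mechanism for it --- ``integration over the compact quotient $M$'' to get constancy of the extreme family and solvability of an auxiliary equation $\{\cH_1,u\}=(\text{known})$ --- is not carried out and is not what makes the argument work. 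There is no group to average over, and averaging the coefficients of a Killing tensor over $M$ does not obviously produce Killing tensors on the factors; likewise the cohomological equation $\{\cH_1,u\}=h$ is in general not solvable within polynomials in momenta, and the paper never solves any such equation. The paper's actual mechanism, which is absent from your sketch, is: restrict $\overline F$ to the flat totally geodesic strips $\gamma\times\mathbb{I}$ (a complete geodesic in one factor times a geodesic segment in the other), invoke the classical normal form for polynomial integrals on the flat plane, $F=P(p_1,p_2,\,t_1p_2-t_2p_1)$ (Lemma~\ref{lem:1}), use boundedness along the complete direction to kill the angular-momentum variable, and then globalize via finite-dimensionality of the space of integrals of degree $\le d$ on $M_1$: writing $\overline F=\sum_\alpha F_\alpha(x,p)\,Q_\alpha(y,q)$ in a basis $F_\alpha$ and using linear independence to conclude each $Q_\alpha$ is an integral on $M_2$ (Lemma~\ref{lem:2} and Corollary~\ref{cor:2}). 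You gesture at the right phenomenon (``the angular-momentum terms \dots\ never survive on the torus'') but never convert it into a proof; your induction on bidegree levels does not close without it.

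Two further inaccuracies are worth flagging. First, you assert that in the proof of Theorem~\ref{thm:2} ``both the constancy of the extreme family and the solvability of the auxiliary equation come from integrating over the compact factor $M_1$'' --- this misdescribes the paper, where compactness of $M_1$ is used only to guarantee boundedness of $\overline F$ on the fiberwise unit spheres, after which the flat-strip argument above takes over; so there is no existing compact-factor integration scheme for you to transplant. Second, your argument leans on every deck transformation acting as a product isometry $(\gamma_1,\gamma_2)$, which holds for the splitting lifted from the holonomy-invariant distributions on $M$, but --- as the remark after Theorem~\ref{thm:3} stresses --- the theorem is claimed for \emph{any} product decomposition of $\overline M$ (regroupings of de~Rham factors, arbitrary splittings of flat factors), and for those $\Gamma$ need not preserve the splitting, so even the $\Gamma$-invariance of your bidegree components $\overline F^{(\ell)}$ can fail. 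The paper's route through Corollary~\ref{cor:2} needs no equivariance at all: only boundedness of $\overline F$ (automatic, since it descends to the compact $UM$) and completeness of one factor.
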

Note that the decomposition of the universal cover $\overline{M}$ into the Riemannian product $M_1\times  M_2$ in Theorem~\ref{thm:3} may not be unique. Indeed, the universal cover may be isometric to the Riemannian product of three or more manifolds which can be differently combined into $M_1$ and $M_2$. In addition, one or both components can be flat or may contain flat factors. Theorem \ref{thm:3} tells us that for any such decomposition of $\overline{M}$, any polynomial in momenta integral is reducible relative to this composition.

For $d=1$, Theorem~\ref{thm:3} follows from \cite[Corollary p.~285]{Tachibana}, and moreover, its local version is also true. The case $d=2$ for the Riemannian product of two compact manifolds follows from \cite[Theorem 5.1]{Heil}.

Note that if $F_1$ is an integral for $g_1$ and $F_2$ is an integral for $g_2$ then their product $F_1 F_2$ is trivially an integral for $\overline{g}$. Theorems~\ref{thm:2} and \ref{thm:3} tell us that under minor global assumptions, every polynomial in momenta integral of the Riemannian product is reducible, in the sense that it can be represented as the sum of the products of the integrals of the form~\eqref{eq:1} on the factors. Without imposing these assumptions, the structure of the integrals on the product manifold becomes more complicated --- we present the corresponding general result in Theorem~\ref{t:decompo} below. In particular, the integrals on the product may not be reducible in the above sense: in Section~\ref{ex:noncomp}, we give an example of two complete, locally irreducible Riemannian manifolds whose Riemannian product admits a polynomial in momenta integral which can not be decomposed into the sum of the products of the form~\eqref{eq:1} on the factors. 

To state Theorem~\ref{t:decompo} we need to introduce some notation. For simplicity, in the context of Theorem~\ref{t:decompo} we assume all the objects to be $C^\infty$ smooth (although $C^n$ with $n$ large enough is sufficient). For a $C^\infty$ function $f:T^*M\to \mathbb{R}$ and for $k\in \mathbb{N}$ we denote by $H^kf \in C^\infty(T^*M)$ the function given by
\begin{equation}\label{eq:H}
H^kf:= \{\{\dots\{\underbrace{ \cH, \{ \cH, \dots \{ \cH}_{\textrm{$k$ times}}, f\}\}\}\dots \}\}.
\end{equation}
We denote by $\sL_k^d(M)$ the space of functions $f \in C^\infty(T^*M)$ each of which is either zero or is a homogeneous polynomial in momenta of degree $d \ge 0$ which satisfies $H^kf=0$. Note that $\sL_1^d(M) = \sK^d(M)$. The elements of $\sL_k^d(M)$ have the following geometric characterisation (see Fact~\ref{f:fact1}\eqref{it:dergeod}): the restriction of any function $f\in \sL_k^d(M)$ to any trajectory $(\gamma(s),p(s))$ of the geodesic flow is a polynomial of degree at most $k-1$ of the time (of the arc length parameter) $s$. In Lemma~\ref{l:findim} we will show that each space $\sL_k^d$ is finite dimensional. \weg{ Indeed, the space $H^1_d(M)$ coincides with the space of polynomial in momenta integrals of degree $d$ and is finite-dimensional by \cite[Theorem 4.3]{Thompson}. Next, on the space $\sL^2_d(M)$ consider the mapping $f \mapsto H^1f$. The kernel of this mapping consists of integrals, polynomial in momenta of degree $d$, and is finite-dimensional by \cite[Theorem 4.3]{Thompson}. The image of this mapping consists of integrals, polynomial in momenta of degree $d+1$, and is again finite-dimensional by \cite[Theorem 4.3]{Thompson}. This implies that $\sL^2_d(M)$ is finite dimensional. Repeating this argumentation we can inductively show that $\sL^3_d(M)$, $\sL^4_d(M)$, \dots , are finite-dimensional. }

For Riemannian manifolds $(M_1,g_1)$ and $(M_2, g_2)$ we denote $(\overline{M},\overline{g}) = (M_1 \times M_2, g_1+g_2)$ their Riemannian product. For $k \ge 1$ and $s_1, s_2 \ge 0$, let $f_i \in \sL_k^{s_i}(M_i), \, i=1,2$. One can easily check that the function $f \in C^\infty(T^*\overline{M})$ defined by
\begin{equation}\label{eq:fM1M2}
 f=\sum_{\ell=0}^{k-1} (-1)^\ell (H_1^\ell f_1)(H_2^{k-\ell-1}f_2)
\end{equation}
is a polynomial integral of degree $s_1+s_2+k-1$ of the geodesic flow of $\overline{g}$. Moreover, if $k > 1$ and if we choose $f_1 \in \sL_k^{s_1}(M_1) \setminus \sL_{k-1}^{s_1}(M_1)$ and $f_2 \in \sL_k^{s_2}(M_2) \setminus \sL_{k-1}^{s_2}(M_2)$, then the resulting function $f$ is not the sum of the products of the polynomial integrals on the factors. Indeed, if it were so we would have had $H_1f=0$, and so $\sum_{\ell=0}^{k-1} (-1)^\ell ((H_1)^{\ell+1} f_1)(H_2^{k-\ell-1}f_2)=0$. But the terms of this summation have different degrees in the momenta on $M_1$, and so each of them must be zero, which contradicts the choice of $f_1$ and $f_2$.

We prove the following.

\begin{theorem} \label{t:decompo} 
 Let $(M_1,g_1)$ and $(M_2,g_2)$ be connected, pseudo-Rieman\-nian manifolds \emph{(}of arbitrary signatures\emph{)} of class $C^\infty$, and let $(\overline{M},\overline{g})=(M_1 \times M_2,g_1+g_2)$ be their product. In the above notation, any $\overline{F} \in K^d(\overline{M})$ is a finite sum of the integrals of the form~\eqref{eq:fM1M2} with $s_1+s_2+k-1=d$.
\end{theorem}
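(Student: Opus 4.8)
\emph{Setup and the staircase.}
The plan is to exploit the bigrading of momenta on the product. Writing a covector on $\overline{M}$ as $(p^{(1)},p^{(2)})$ according to the splitting $T^*\overline{M}=T^*M_1\oplus T^*M_2$, every polynomial in momenta on $T^*\overline{M}$ decomposes into bihomogeneous parts of bidegree $(a,b)=(\deg_{p^{(1)}},\deg_{p^{(2)}})$. Since $\overline{g}=g_1+g_2$ the Hamiltonian splits as $\cH=\cH_1+\cH_2$, where $\cH_i$ is the geodesic Hamiltonian of $(M_i,g_i)$, and I set $H_1=\{\cH_1,\cdot\}$, $H_2=\{\cH_2,\cdot\}$, so that $H^k$ from \eqref{eq:H} equals $(H_1+H_2)^k$. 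A direct check shows that $H_1$ raises the first bidegree by one and preserves the second, that $H_2$ does the opposite, and that $H_1,H_2$ commute (by the Jacobi identity together with $\{\cH_1,\cH_2\}=0$, the latter because $\cH_1,\cH_2$ depend on disjoint groups of variables). First I would write $\overline{F}=\sum_{\ell=0}^d F^{(\ell)}$ with $F^{(\ell)}$ of bidegree $(d-\ell,\ell)$ and split $0=H^1\overline{F}=(H_1+H_2)\overline{F}$ into bihomogeneous components, obtaining the staircase
\begin{equation*}
 H_1 F^{(0)}=0,\qquad H_1 F^{(\ell)}+H_2 F^{(\ell-1)}=0\ (1\le\ell\le d),\qquad H_2 F^{(d)}=0 .
\end{equation*}
Iterating and using commutativity then yields $H_1^{\ell+1}F^{(\ell)}=0$, $H_2^{d-\ell+1}F^{(\ell)}=0$, and the cross relations $H_1^{\ell}F^{(\ell)}=(-1)^\ell H_2^{\ell}F^{(0)}$, which carry all the information coupling the two factors.

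\emph{Finite-rank structure of kernels.}
The second ingredient is a finiteness statement. If $G$ is bihomogeneous of bidegree $(a,b)$ and $H_1^k G=0$, then for each frozen $(x_2,p^{(2)})$ the function $(x_1,p^{(1)})\mapsto G$ lies in $\sL_k^a(M_1)$; choosing a basis $B_1,\dots,B_r$ of this space, which is finite-dimensional by Lemma~\ref{l:findim}, and recovering the coefficients through a dual basis of linear functionals, I obtain a \emph{finite} decomposition $G=\sum_n B_n\chi_n$ with $B_n\in\sL_k^a(M_1)$ and $\chi_n$ a function on $T^*M_2$ homogeneous of degree $b$ in the momenta. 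Applied to the top component, $H_1F^{(0)}=0$ gives $F^{(0)}=\sum_i K_i\phi_i$ with $K_i$ a basis of $\sK^d(M_1)$ and $\phi_i$ functions on $M_2$; since $H_2^{d+1}F^{(0)}=0$ and the $K_i$ are independent, each $\phi_i\in\sL_{d+1}^0(M_2)$. The symmetric statement holds with $M_1,M_2$ interchanged.

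\emph{Peeling off elementary integrals.}
I would then argue by induction, the outer parameter being the highest $M_1$-degree $D$ occurring in the current integral and the inner parameter the largest order $k^*$ (smallest $k$ with $H_2^k\phi_i=0$) among the coefficients of its top component $\Gamma$ (of bidegree $(D,d-D)$, with $H_1\Gamma=0$). Grouping $\Gamma$ by this order and rewriting its top-order part as $\sum_m\tilde K_m\tilde\phi_m$ with the $\tilde\phi_m\in\sL_{k^*}^{d-D}(M_2)$ linearly independent modulo $\sL_{k^*-1}^{d-D}(M_2)$, the elements $\tilde\eta_m:=H_2^{k^*-1}\tilde\phi_m$ become linearly independent. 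For each $m$ I want an elementary integral \eqref{eq:fM1M2} of length $k^*$ with $f_2=\tilde\phi_m$ whose top component is $\tilde K_m\tilde\phi_m$; this amounts to solving $H_1^{k^*-1}f_1=\pm\tilde K_m$ with $f_1\in\sL_{k^*}^{D-k^*+1}(M_1)$. Solvability I would extract from the cross relation $H_1^{k^*-1}\Gamma'=(-1)^{k^*-1}\sum_m\tilde K_m\tilde\eta_m$, where $\Gamma'$ is the component of $M_1$-degree $D-k^*+1$: decomposing $\Gamma'\in\Ker H_1^{k^*}$ as $\sum_n B_n\chi_n$ with $B_n\in\sL_{k^*}^{D-k^*+1}(M_1)$, applying $H_1^{k^*-1}$, and pairing with functionals dual to the independent $\tilde\eta_m$, I conclude that each $\tilde K_m$ lies in the image of $H_1^{k^*-1}$ on $\sL_{k^*}^{D-k^*+1}(M_1)$, furnishing the desired $f_1$. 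Subtracting these elementary integrals removes the order-$k^*$ part of $\Gamma$ without raising $D$; decreasing $k^*$, then $D$, and terminating at $k^*=1$ (genuine products $K\phi$ with $K\in\sK(M_1)$, $\phi\in\sK(M_2)$) completes the induction.

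\emph{Main obstacle.}
The one nontrivial point is this solvability step, namely that the leading $M_1$-coefficients $\tilde K_m$ of the top component actually lie in the image of $H_1^{k^*-1}$ restricted to $\sL_{k^*}^{D-k^*+1}(M_1)$. This is exactly where the two factors are coupled: it cannot be read off from the top component alone but follows, via the cross relation, from the existence of the deeper component $\Gamma'$ of $\overline{F}$, combined with the finite-dimensionality of the spaces $\sL_k^a$. Once this is established, the bookkeeping of the induction is routine, and the termination at $k^*=1$ reproduces, for compact factors where all $\sL_k^a$ collapse to $\sK^a$, the reducibility of Theorems~\ref{thm:2} and \ref{thm:3}.
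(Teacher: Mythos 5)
Your proposal is correct, and its first half coincides with the paper's own proof: the bihomogeneous decomposition, the staircase system (the paper's \eqref{eq:hompq}), and the use of Lemma~\ref{l:findim} together with dual point-evaluation functionals to place each bihomogeneous component in a finite tensor product $\sL^k_{d-k+1}(M_1)\otimes\sL^{d-k}_{k+1}(M_2)$ are exactly the paper's steps. Where you genuinely diverge is the endgame. The paper treats the conclusion as one piece of linear algebra for the two commuting nilpotent operators $H_1,H_2$ acting on the finite-dimensional space $\oplus_{k=0}^{d}\sL^k_{d-k+1}(M_1)\otimes\sL^{d-k}_{k+1}(M_2)$: it puts $H_1$ and $H_2$ in Jordan form, identifies each cyclic block with polynomials in one variable on which $H_1$, $H_2$ act as $\frac{d}{du}$, $\frac{d}{dv}$, and observes that the kernel of $\frac{d}{du}+\frac{d}{dv}$ on a block $W^\la\otimes V^\mu$ is spanned by the powers of $u-v$, which correspond precisely to the expressions \eqref{eq:fM1M2}. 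You instead run a double induction that peels off elementary integrals one top component at a time, with the solvability of $H_1^{k^*-1}f_1=\pm\tilde K_m$ inside $\sL^{D-k^*+1}_{k^*}(M_1)$ extracted from the cross relation $H_1^{k^*-1}\Gamma'=(-1)^{k^*-1}H_2^{k^*-1}\Gamma$ and the deeper component $\Gamma'$; this step is sound, since expanding $\Gamma'=\sum_n B_n\chi_n$ over a basis of $\sL^{D-k^*+1}_{k^*}(M_1)$ and pairing against functionals dual to the independent $\tilde\eta_m$ inside the finite-dimensional span of $\{\tilde\eta_m\}\cup\{\chi_n\}$ does produce the required preimage, the subtracted elementary integral has top bidegree exactly $(D,d-D)$ with leading term $\tilde K_m\tilde\phi_m$, and the bound $k^*\le D+1$ (from the downward staircase) guarantees that $\Gamma'$ has nonnegative $M_1$-degree, so the induction on $(D,k^*)$ terminates at genuine products of Killing integrals. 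Both endgames establish the same underlying fact --- that the kernel of a sum of two commuting nilpotents on such a bigraded tensor product is spanned by the ``binomial'' elements \eqref{eq:fM1M2} --- but the paper's Jordan-form model is shorter and makes the kernel description transparent, while your peeling argument is more constructive and exhibits explicitly how the summands are extracted from $\overline{F}$.
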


Our interest to the study of the (quite natural) questions on the structure of Killing tensors on the product manifolds stems from our project on Killing tensors on symmetric spaces motivated by \cite[Question 3.9]{BMMT}, see also \cite{MN}. The long term goal is to describe Killing tensors on all symmetric spaces, and Theorem~\ref{thm:2} may help one to restrict the study to the case of irreducible symmetric spaces.

The authors would like to thank A.~Moroianu, G.~Rastelli and U.~Semmelmann for their useful comments, and the anonymous referee for valuable suggestions.

\section{Proof of Theorem ~\ref{thm:2} and Theorem~\ref{thm:3}}

We start with two technical lemmas, Lemma~\ref{lem:1} and Lemma~\ref{lem:2}. Corollary~\ref{cor:2} of the Lemma~\ref{lem:2} will immediately imply both Theorems~\ref{thm:2} and~\ref{thm:3}.

\begin{lemma} \label{lem:1}
We consider $(\mathbb{M}^2, g_{euclidean})=(\mathbb{R}, dt_1^2) \times(\mathbb{I}, dt_2^2), $ where $\mathbb{I}\subseteq \mathbb{R}$ is an open interval and $t_1$ and $t_2$ are the Cartesian coordinates on $\mathbb{R}$ and on $\mathbb{I}$, respectively. Suppose that the function $F(t_1, t_2, p_1, p_2) = \sum_{i=0}^d a_i(t_1,t_2) p_1^{d-i}p_2^i$ is an integral on $\mathbb{M}^2$ which has the property that for any fixed $p=(p_1, p_2)$ and any $t_2 \in \mathbb{I}$, the function $t_1 \mapsto |F(t_1, t_2, p_1, p_2)|$ is bounded. Then the coefficients $a_i$ are constant.
 \end{lemma}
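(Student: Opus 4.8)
The plan is to exploit the flatness of $\mathbb{M}^2$, which makes the integrability condition $\{\cH, F\}=0$ completely explicit, and then to use the boundedness hypothesis in the unbounded direction $t_1$ to remove all dependence of the $a_i$ on the coordinates.

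First I would write out the Poisson bracket. Since $\overline{g}$ is Euclidean we have $\cH=\tfrac12(p_1^2+p_2^2)$, so $\{\cH,F\}=p_1\partial_{t_1}F+p_2\partial_{t_2}F$. Substituting $F=\sum_{i=0}^d a_i(t_1,t_2)p_1^{d-i}p_2^i$ and collecting the coefficient of each monomial $p_1^{d+1-j}p_2^{j}$, the equation $\{\cH,F\}=0$ becomes the triangular system
\begin{equation*}
\partial_{t_1}a_0=0,\qquad \partial_{t_1}a_j=-\partial_{t_2}a_{j-1}\quad(1\le j\le d),\qquad \partial_{t_2}a_d=0.
\end{equation*}

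Second, I would prove by induction on $j$ that each $a_j$ is a polynomial in $t_1$ of degree at most $j$, with coefficients depending on $t_2$. The base case $a_0=a_0(t_2)$ is the first relation above. For the inductive step, if $a_{j-1}$ is a polynomial of degree $\le j-1$ in $t_1$, then so is $\partial_{t_2}a_{j-1}$, hence $\partial_{t_1}a_j=-\partial_{t_2}a_{j-1}$ has degree $\le j-1$ in $t_1$, and its $t_1$-antiderivative $a_j$ has degree $\le j$. In particular, for every fixed $(t_2,p_1,p_2)$ the map $t_1\mapsto F(t_1,t_2,p_1,p_2)$ is a polynomial in $t_1$ of degree at most $d$. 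Here is where the hypothesis enters: a polynomial on all of $\mathbb{R}$ is bounded only if it is constant, and $t_1$ indeed ranges over the whole line (this is the model of the complete/compact factor). Thus the boundedness of $t_1\mapsto|F(t_1,t_2,p_1,p_2)|$ forces $\partial_{t_1}F\equiv 0$; since the monomials $p_1^{d-i}p_2^i$ are linearly independent, this gives $\partial_{t_1}a_i=0$ for all $i$. Feeding this back into the triangular system yields $\partial_{t_2}a_{j-1}=-\partial_{t_1}a_j=0$ for $1\le j\le d$, together with $\partial_{t_2}a_d=0$; hence every $a_i$ is independent of both $t_1$ and $t_2$, i.e. constant.

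I expect the only real subtlety to be a regularity bookkeeping issue: expressing $a_j$ as a $t_1$-polynomial whose coefficients are $t_2$-derivatives of the lower $a_i$ presupposes that these are differentiable enough (up to order $d$). This is not a genuine obstacle, since the triangular system itself bootstraps differentiability of the $a_i$ in $t_2$; in the plan I would either record this bootstrapping or simply assume the integral smooth, the computation being purely local. The conceptually essential point is the combination carried out in the second step: \emph{integrability forces polynomiality in $t_1$ of bounded degree}, so that over the complete line boundedness is equivalent to constancy, after which closing the loop through the triangular system is immediate.
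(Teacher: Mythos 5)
Your proof is correct, and it takes a genuinely different route from the paper's. The paper settles the lemma in two lines by citing the known classification of homogeneous polynomial integrals of the flat metric in dimension two (Thompson, Takeuchi, Milson): every such integral can be written as $F=P(p_1,p_2,t_1p_2-t_2p_1)$ with $P$ a polynomial with \emph{constant} coefficients, whereupon boundedness in $t_1$ (for $p_2\neq 0$ the third argument sweeps out all of $\mathbb{R}$) forces $P$ to be independent of the angular momentum, so the $a_i$ are constant. You instead prove the needed structure from scratch: the triangular system $\partial_{t_1}a_0=0$, $\partial_{t_1}a_j=-\partial_{t_2}a_{j-1}$, $\partial_{t_2}a_d=0$ is exactly what $\{\cH,F\}=0$ gives, your induction correctly yields that each $a_j$ is polynomial of degree $\le j$ in $t_1$, and then boundedness on the full line $t_1\in\mathbb{R}$ kills the $t_1$-dependence, after which back-substitution into the triangular system kills the $t_2$-dependence as well. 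What each approach buys: the paper's is shorter but leans on an external theorem stated for $\mathbb{R}^2$, so one must implicitly note that the classification localizes to the strip $\mathbb{R}\times\mathbb{I}$ (fine, since the strip is connected and simply connected, but left unsaid in the paper); your argument is self-contained, works on the strip directly, and in effect reproves the relevant half of the classification in the weaker form ``coefficients are polynomials of bounded degree in $t_1$'', which is all the lemma needs. The one point you flag --- regularity of the $t_2$-coefficients --- is real but handled exactly as you suggest: since each $a_{j-1}$ is a coefficient of the ($C^1$) integral $F$ and is a polynomial $\sum_{m\le j-1}c_m(t_2)t_1^m$ in $t_1$, evaluating $\partial_{t_2}a_{j-1}$ at $j$ distinct values of $t_1$ and inverting the Vandermonde system shows each $c_m$ is differentiable with $\partial_{t_2}a_{j-1}=\sum_m c_m'(t_2)t_1^m$, so the bootstrap closes; the paper's route never encounters this issue because the cited classification already delivers constant coefficients.
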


\begin{proof} It is known (e.g., \cite[Theorem 4.5]{Thompson}, \cite[Theorem 4.4]{Takeuchi} or \cite{Milson}) that for any homogeneous polynomial in momenta integral on $(\mathbb{R}^2, g_{euclidean})$, there exists a homogeneous polynomial $P( \ \cdot \ , \ \cdot \ , \ \cdot )$ (in three variables, with constant coefficients) such that $F=P(p_1, p_2,t_1 p_2 - t_2 p_1) $. Since for any fixed $p_1, p_2, t_2$, the function $t_1\mapsto |P(p_1, p_2,t_1 p_2 - t_2 p_1)| $ is bounded, the polynomial $P$ does not depend on the third variable implying that all the coefficients $a_i$ are constant, as claimed.
\end{proof}

We use the following notation in Lemma~\ref{lem:2} below. Consider the Riemannian product $(\overline{M}, \overline{g})= (M, g)\times (\mathbb{I}, dt^2)$, where $(M,g)$ is a Riemannian manifold of dimension $m$ and $\mathbb{I}\subseteq \mathbb{R}$ is an open interval. Denote by $t$ the standard coordinate on $\mathbb{I}$ and by $(t,q)$ the corresponding coordinates on $T^*\mathbb{I}$. Let $x=(x_1,...,x_m)$ be a local coordinate system on $M$ and denote the corresponding coordinates on $T^*M$ by $(x,p)$. Let $\overline{\cH}$ be the Hamiltonian of the geodesic flow of $\overline{g}$ and let $U_{(x,t) }\overline{M}:= \{(p,q)\in T^*_{(x,t) }\overline{M} \mid \overline{\cH}(x,t, p,q)=\frac12\}$ denote the unit sphere in $T^*_{(x,t)}\overline{M}$.

\begin{lemma} \label{lem:2}
 Let $(M,g)$ be a connected, complete Riemannian manifold, and let $(\overline{M}, \overline{g})= (M, g)\times (\mathbb{I}, dt^2)$. Adopt the notation above. Suppose that $\overline{F} \in \sK^d(\overline{M})$ is such that for any $t \in \mathbb{R}$, the function $(x\in M, (p,q)\in U_{(x,t) }\overline{M} ) \mapsto |\overline{F}(x,t, p, q)|$ is bounded.

 Then $\overline{F} (x, t, p, q)= \sum_{\ell=0}^d q^\ell F^{d-\ell}(x,t)$, where $F^{d-\ell} \in \sK^{d-\ell}(M)$.
\end{lemma}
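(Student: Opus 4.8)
The plan is to reduce the statement to the two-dimensional model of Lemma~\ref{lem:1} by restricting $\overline F$ to flat totally geodesic strips. First I would split the integral condition $\{\overline{\cH},\overline F\}=0$ according to the product structure. Since $\overline{\cH}=\cH_M+\tfrac12 q^2$, where $\cH_M=\tfrac12 g^{ij}p_ip_j$ involves only $(x,p)$ and $\tfrac12 q^2$ only $q$, the bracket decomposes and the condition becomes
\begin{equation*}
 \{\cH_M,\overline F\}_M=q\,\frac{\partial \overline F}{\partial t},
\end{equation*}
where $\{\ ,\ \}_M$ is the Poisson bracket on $T^*M$ with $(t,q)$ treated as parameters; geometrically this reflects that $t$ is cyclic and $q$ is preserved along the flow. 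Writing $\overline F=\sum_{\ell=0}^d q^\ell A_\ell$ with each $A_\ell=A_\ell(x,t,p)$ homogeneous of degree $d-\ell$ in $p$, the goal is to show that every $A_\ell$ is independent of $t$ and satisfies $\{\cH_M,A_\ell\}_M=0$, i.e. $A_\ell\in\sK^{d-\ell}(M)$.

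The crucial step is to prove $\partial A_\ell/\partial t=0$ for all $\ell$. For this I would fix a unit-speed geodesic $\gamma\colon\mathbb{R}\to M$ (defined on all of $\mathbb{R}$ by completeness of $M$) and consider the map $\iota\colon(s,t)\mapsto(\gamma(s),t)$. Since a one-dimensional geodesic is totally geodesic in $M$, the image $\gamma\times\mathbb{I}$ is a totally geodesic submanifold of $\overline M$ and $\iota^*\overline g=ds^2+dt^2$ is flat; thus $\iota$ realises $(\mathbb{R},ds^2)\times(\mathbb{I},dt^2)$ as a totally geodesic strip. Because the geodesics of this strip are geodesics of $\overline M$ on which the integral $\overline F$ is constant, the pulled-back function
\begin{equation*}
 \tilde F(s,t,p_s,q)=\sum_{\ell=0}^d q^\ell\, p_s^{\,d-\ell}\,A_\ell(\gamma(s),t,\nu(s)),
\end{equation*}
where $\nu(s)$ is the covector $g$-dual to $\dot\gamma(s)$, is a polynomial-in-momenta integral of the flat geodesic flow on the strip. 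Moreover, for fixed $(p_s,q,t)$ the covector $(p_s\nu(s),q)$ has constant norm $(p_s^2+q^2)^{1/2}$, so the boundedness hypothesis on $\overline F$ together with its homogeneity bounds $s\mapsto|\tilde F(s,t,p_s,q)|$. Hence Lemma~\ref{lem:1} applies and the coefficients $A_\ell(\gamma(s),t,\nu(s))$ are constant in $(s,t)$; in particular they do not depend on $t$.

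Evaluating at $s=0$, this shows $\partial A_\ell/\partial t$ vanishes on the initial covector of $\gamma$. As every unit covector at every point of $M$ is $\dot\gamma$ for a suitable geodesic, $\partial A_\ell/\partial t$ vanishes on the whole unit cosphere bundle, and by homogeneity in $p$ it vanishes identically. Feeding $\partial \overline F/\partial t=0$ back into the displayed integral condition gives $\{\cH_M,\overline F\}_M=0$; expanding in powers of $q$ (whose coefficients $A_\ell$ no longer involve $t$) yields $\{\cH_M,A_\ell\}_M=0$ for each $\ell$, which is exactly $A_\ell\in\sK^{d-\ell}(M)$ and proves the claim. The main obstacle I anticipate is the second paragraph: one must justify carefully that the restriction of $\overline F$ to the totally geodesic flat strip is again a polynomial-in-momenta integral to which Lemma~\ref{lem:1} applies, and that the bound is inherited. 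It is here, and only here, that completeness of $(M,g)$ is used, since it guarantees that the strip is the full $(\mathbb{R},ds^2)\times(\mathbb{I},dt^2)$ required by Lemma~\ref{lem:1}.
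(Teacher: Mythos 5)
Your proposal is correct and follows essentially the same route as the paper: decompose $\overline F$ in powers of $q$, pull back to the totally geodesic flat strip $\gamma\times\mathbb{I}$ over a complete unit-speed geodesic, invoke Lemma~\ref{lem:1} to kill the $t$-dependence, and then expand $\{\overline{\cH},\overline F\}=0$ in powers of $q$ to conclude each coefficient lies in $\sK^{d-\ell}(M)$. You even verify the inheritance of the boundedness hypothesis more explicitly than the paper does (modulo a harmless sign-convention ambiguity in your displayed bracket identity), so there is nothing to fix.
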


\begin{proof}
Given $\overline{F} \in \sK^d(\overline{M})$, we decompose it as
\begin{equation*}
\overline{F} = \sum_{\ell=0}^d q^\ell F^{d-\ell},
\end{equation*}
where $F^{d-\ell}=F^{d-\ell}(x,t,p)$ is a polynomial of degree $d-\ell$ in $p$ whose coefficients may depend on $x$ and $t$.

We first show that the functions $F^{d-\ell}(x,t, p),\, \ell=0, \dots, d$, do not depend on $t$, so that $F^{d-\ell}=F^{d-\ell}(x, p)$. Consider a geodesic $\gamma:\mathbb{R}\to M$ parameterised by an arc length $t_1 \in \br$. Next, consider the totally geodesic immersion $T:\mathbb{R} \times \mathbb{I} \to M\times \mathbb{I}$ given by $(t_1, t) \mapsto (\gamma(t_1), t) $. The pullback of the integral $\overline{F}$ under $T$ satisfies the assumptions of Lemma~\ref{lem:1} implying that its coefficients $F^{d- \ell}$ do not depend on $t$, as we claimed.

Let us now show that all the functions $F^{d-\ell}(x,p)$ are integrals of the geodesic flow of $ {g}$. Note that the Hamiltonian $\overline{\cH}$ of the geodesic flow of the product metric $\overline{g}$ is given by
\begin{equation*}
\overline{\cH}(x,t, p, q)= \cH(x,p)+ \tfrac{1}{2}q^2,
\end{equation*}
where $\cH=\cH(x,p)$ is the Hamiltonian of $g$, and that
\begin{equation*}
\{ F^{d-\ell}(x,p), q\}= \{q^\ell, q\}=0 .
\end{equation*}
Using this, we obtain
\begin{equation*}
0=\{\overline{\cH}, \sum_{\ell=0}^d q^\ell F^{d-\ell}\} = \sum_{\ell=0}^d q^\ell \{\cH, F^{d-\ell}\}
\end{equation*}
implying that $\{\cH, F^{d-\ell}\} =0$, as we want.
\end{proof}

\weg{
\begin{remark} \label{rem:1} Later, in the proof of Corollary~\ref{cor:2}
 we will use Lemma \ref{lem:2} in the following equivalent form: (under assumptions of Lemma \ref{lem:2}), for any $g$-geodesic
 $ \overline{\gamma}(t) =(\gamma(t), \gamma_2(t)) $ (of course $\gamma_2(t)$ is 1-dimensional and is simply $t\mapsto \textrm{const}\ t$) there exists polynomial in velocities integrals $F^{0}(x,u), F^{1}(x,u), \dots , F^{d}(x,u)$ of the geodesic flow of $g$ such that
 \begin{equation} \label{eq:6}
 \overline{ F}\left( x, \gamma_2(t), p,q \right)= \sum_{\ell=0}^d \left(\overline{g}(\dot\gamma_2(t), p ) \right)^\ell F^{d-\ell}(x,q),
 \end{equation}
where $F^{d-\ell}$ are polynomial in velocities integrals of the geodesic flow of $g$.

\end{remark}}

\begin{corollary}\label{cor:2}
 Consider the Riemannian product $(\overline{M}, \overline{g})= (M_1, g_1)\times (M_2, g_2)$ of two connected Riemannian manifolds such that $(M_1, g_1)$ is complete. Let $\overline{F} \in \sK^d(\overline{M})$ be such that for any $y \in M_2$, the function $(x\in M_1, \overline{p}\in U_{(x,y) }\overline{M} ) \mapsto |\overline{F}(x,y,\overline{p})| $ is bounded. Then $\overline{F}$ is a finite sum of the products of the form $F_1^{d-\ell} F_2^{\ell}$, where $F_1^{d-\ell} \in \sK^{d-\ell}(M_1)$ and $F_2^{\ell} \in \sK^\ell(M_2)$.
\end{corollary}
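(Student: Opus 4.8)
The plan is to reduce the general second factor $(M_2,g_2)$ to the one-dimensional situation of Lemma~\ref{lem:2} by restricting $\overline F$ along the geodesics of $g_2$, and then to recover the product structure by a separation-of-variables argument resting on the finite-dimensionality of the spaces $\sK^\bullet(M_1)$. Throughout I use the splitting $T^*\overline M \cong T^*M_1 \times T^*M_2$, writing a covector as $(p,\eta)$, and I decompose $\overline F = \sum_{\ell=0}^d \Phi_\ell$, where $\Phi_\ell(x,p;y,\eta)$ is the part of $\overline F$ homogeneous of degree $\ell$ in $\eta$ (hence of degree $d-\ell$ in $p$). It suffices to show each $\Phi_\ell$ is a finite sum of products $F_1^{d-\ell}F_2^\ell$ with $F_1^{d-\ell}\in\sK^{d-\ell}(M_1)$ and $F_2^\ell\in\sK^\ell(M_2)$.

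First I would fix $y_0\in M_2$ and a unit direction, let $\sigma:\mathbb{I}_\sigma\to M_2$ be the maximal unit-speed geodesic with that initial data, and apply Lemma~\ref{lem:2} with $(M,g)=(M_1,g_1)$ and the interval $\mathbb{I}=\mathbb{I}_\sigma$, pulling back $\overline F$ under the totally geodesic isometric immersion $(x,t)\mapsto(x,\sigma(t))$. Because $\sigma$ is unit-speed, this immersion sends unit covectors to unit covectors, so the pullback lies in $\sK^d(M_1\times\mathbb{I}_\sigma)$ and inherits the boundedness hypothesis from the Corollary (applied at each point $y=\sigma(t)$); note that only completeness of $M_1$ is used, not of $M_2$. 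Lemma~\ref{lem:2} then gives that the pullback equals $\sum_\ell q^\ell F_\sigma^{d-\ell}(x,p)$ with $F_\sigma^{d-\ell}\in\sK^{d-\ell}(M_1)$ independent of $t$. Writing the pullback instead as $\sum_\ell q^\ell\,\Phi_\ell(x,p;\sigma(t),v(t))$, where $v(t)=\dot\sigma(t)^\flat$ and using homogeneity of $\Phi_\ell$ in $\eta$, and matching powers of $q$, I obtain two facts: (i) $\Phi_\ell(\cdot,\cdot;y,w)\in\sK^{d-\ell}(M_1)$ for every $(y,w)\in T^*M_2$ (for unit $w$ from the above, for all $w$ by homogeneity); and (ii) $\Phi_\ell(x,p;\sigma(t),v(t))$ is independent of $t$ for every $g_2$-geodesic, i.e.\ $\Phi_\ell(x,p;\cdot,\cdot)$ is constant along every trajectory of the geodesic flow of $g_2$, hence $\Phi_\ell(x,p;\cdot,\cdot)\in\sK^\ell(M_2)$ for every $(x,p)\in T^*M_1$. (Incompleteness of $M_2$ is harmless, since being an integral is the infinitesimal condition $\{\cH_2,\Phi_\ell\}=0$.)

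For the second step I would run the separation-of-variables argument. Now $\Phi_\ell$ is a function on $T^*M_1\times T^*M_2$ which, for each fixed point of $T^*M_2$, lies in the finite-dimensional space $W_1:=\sK^{d-\ell}(M_1)$ (finite-dimensional by Lemma~\ref{l:findim}), and, for each fixed point of $T^*M_1$, lies in the linear space $W_2:=\sK^\ell(M_2)$. Choosing a basis $\phi_1,\dots,\phi_r$ of $W_1$ and writing $\Phi_\ell(\cdot;y,w)=\sum_i\psi_i(y,w)\phi_i(\cdot)$, I pick points $a_1,\dots,a_r\in T^*M_1$ with $\det(\phi_i(a_j))\ne 0$; then Cramer's rule expresses each $\psi_i$ as a fixed (constant-coefficient) linear combination of the functions $\Phi_\ell(a_j;\cdot,\cdot)\in W_2$, so $\psi_i\in W_2$ since $W_2$ is linear. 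Hence $\Phi_\ell=\sum_i\phi_i\otimes\psi_i$ is a finite sum of products of an element of $\sK^{d-\ell}(M_1)$ and one of $\sK^\ell(M_2)$, and summing over $\ell$ finishes the proof.

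I expect the only genuinely delicate point to be the passage to Lemma~\ref{lem:2}: one must verify that restriction along an arbitrary, possibly incomplete, $g_2$-geodesic via the totally geodesic immersion yields an honest bounded integral on $M_1\times\mathbb{I}_\sigma$, so that the completeness of $M_1$ alone suffices and the boundedness assumption transfers correctly. Once facts (i) and (ii) are in hand, the remaining step is pure linear algebra powered by the finite-dimensionality of $\sK^{d-\ell}(M_1)$, and presents no obstacle.
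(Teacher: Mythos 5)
Your proof is correct and follows essentially the same route as the paper's: restriction along arc-length $g_2$-geodesics via the totally geodesic immersion $(x,t)\mapsto(x,\sigma(t))$, an application of Lemma~\ref{lem:2} (whose hypotheses, as you rightly note, require completeness only of the first factor and allow $\mathbb{I}_\sigma$ to be any open interval), and the finite-dimensionality of the space of integrals on $M_1$ to separate the variables. The only organizational differences are that you work with the bi-homogeneous components $\Phi_\ell$ and get integrality of the $M_2$-side factors directly from the $t$-independence in Lemma~\ref{lem:2} together with a Cramer's-rule separation, whereas the paper expands $\overline{F}$ in a basis $F_\alpha$ of integrals on $M_1$ and deduces $\{\cH_2,Q_\alpha\}=0$ from $0=\sum_{\alpha}\{\cH_2,Q_\alpha\}F_\alpha$ and the linear independence of the $F_\alpha$.
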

\begin{proof}
Let $x =(x_1,..., x_{m_1}),\, \dim M_1=m_1$, be local coordinates on $M_1$ and denote by $(x,p)$ the corresponding coordinates on the cotangent bundle of $M_1$; let $y =(y_1,..., y_{m_2}),\, \dim M_2=m_2$, be local coordinates on $M_2$ and denote by $(y, q) $ the corresponding coordinates on the cotangent bundle of $M_2$.

Recall that the space of polynomial in momenta integrals of a fixed degree of the geodesic flow is finite dimensional (under our $C^2$-smooth\-ness assumption, this was shown in \cite[Proof of Theorem~1]{KM}, while the $C^\infty$ case is older and is done already in \cite[Theorem~4.3]{Thompson}). We consider a basis $F_\a,\, \a=1, \dots, N$, for the space of the polynomial integrals of degree at most $d$ for the metric $g_1$. Similar to the proof of Lemma~\ref{lem:2}, for any geodesic $\gamma_2:\mathbb{I}\to M_2$ parameterised by an arc length $t$, we consider the totally geodesic immersion
\begin{equation*}
T:M_1 \times \mathbb{I} \to \overline{M} \ \textrm{ given by } \ (x,t)\mapsto (x, \gamma(t)).
\end{equation*}
The pullback of the integral $\overline{F}$ satisfied the assumptions of Lemma~\ref{lem:2} implying that
\begin{equation} \label{eq:8}
 \overline{F} = \sum_{\alpha=1}^N F_\alpha(x,p) Q_\alpha(y,q),
\end{equation}
where the functions $Q_{\alpha}$ do not depend on $x$ and on $p$ and are polynomial in $q$. 

We now show that the functions $Q_\alpha(y,q)$ are the integrals for the geodesic flow of $g_2$. Clearly, $\overline{\cH}=\cH_{1}+ \cH_{2}$, where $\cH_{i}, \, i=1,2$, is the Hamiltonian of the geodesic flow of $g_i$ on $M_i$. Note that $\cH_{1}$ does not depend on $(y,q)$ which implies
$\{\cH_{1}, Q_\alpha\}=0$. Similarly $\{\cH_{2}, F_\alpha\}=0$. Note also that $\{\cH_{1}, F_\alpha\}=0$, because $F_\alpha$ are integrals of the geodesic flows of $g_1$. Then for the integral given by~\eqref{eq:8} we have
\begin{equation*}
0=\{\cH_{1}+ \cH_{2}, \sum_{\alpha=1}^N F_\alpha Q_\alpha \} = \sum_{\alpha=1}^N \{\cH_{2}, Q_\alpha \} F_\alpha.
\end{equation*}
Since the integrals $F_\alpha$ are linearly independent, we obtain that all the brackets $\{\cH_{g_2}, Q_\alpha \}$ are zero, and the claim follows.
\end{proof}

Note that Corollary~\ref{cor:2} immediately implies Theorems~\ref{thm:2} and \ref{thm:3}. Indeed, if $M_1$ is compact then for every $y\in M_2$,
the subspace $\{(x\in M_1, \overline{p} \in U_{(x,y)}\overline{M} ) \}$ is compact, and so any smooth function on it is bounded, which proves Theorem~\ref{thm:2}. Similarly, if $M$ is compact, then the unit cotangent bundle $U\overline{M} $ is also compact, which gives Theorem~\ref{thm:3}.

\section{Proof of Theorem~\ref{t:decompo}}
\label{s:pf}

The following fact is well known (and is easy to check).

\begin{fact} \label{f:fact1}
{\ }
\begin{enumerate}[label=\emph{(\alph*)},ref=\alph*]
 \item \label{it:fnilp}
 If $f \in C^\infty(T^*M)$ is a homogeneous polynomial of degree $d$ in the momenta, 
 then $\{\cH,f\}$ is either zero, or a homogeneous polynomial of degree $d+1$ in the momenta.

 \item \label{it:dergeod}
 Let $(\gamma(s), p(s)), \, s \in (a,b) \subseteq \br$, be a trajectory of the geodesic flow of $g$. Then for $f \in C^\infty(T^*M)$ we have $\{\cH,f\}(\gamma(s),p(s))=\frac{d}{ds}(f(\gamma(s),p(s)))$.
\end{enumerate}
\end{fact}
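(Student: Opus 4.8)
The plan is to prove the two parts of Fact~\ref{f:fact1} directly from the coordinate expression of the Poisson bracket and the Hamiltonian. Recall that in canonical coordinates $(x,p)$ on $T^*M$ the standard Poisson bracket of two functions $f,h\in C^\infty(T^*M)$ is
\begin{equation*}
\{h,f\}=\sum_j\Big(\frac{\partial h}{\partial p_j}\frac{\partial f}{\partial x_j}-\frac{\partial h}{\partial x_j}\frac{\partial f}{\partial p_j}\Big),
\end{equation*}
and that the geodesic Hamiltonian is $\cH=\tfrac12 g^{ij}(x)p_ip_j$, a homogeneous polynomial of degree $2$ in the momenta whose coefficients depend only on the position $x$.

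For part~\eqref{it:fnilp}, I would argue by tracking the degree in the momenta. If $f$ is homogeneous of degree $d$ in $p$, then $\partial f/\partial x_j$ is again homogeneous of degree $d$ in $p$, while $\partial f/\partial p_j$ is homogeneous of degree $d-1$. Since $\cH$ is homogeneous of degree $2$ in $p$, the factor $\partial\cH/\partial p_j=g^{ij}p_i$ is homogeneous of degree $1$ and $\partial\cH/\partial x_j=\tfrac12\,\partial_{x_j}g^{ik}\,p_ip_k$ is homogeneous of degree $2$. Substituting into the bracket, the first group of terms $\sum_j(\partial\cH/\partial p_j)(\partial f/\partial x_j)$ has momentum degree $1+d$, and the second group $\sum_j(\partial\cH/\partial x_j)(\partial f/\partial p_j)$ has momentum degree $2+(d-1)=d+1$. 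Hence every monomial in $\{\cH,f\}$ has degree exactly $d+1$ in $p$, so $\{\cH,f\}$ is a homogeneous polynomial of degree $d+1$ in the momenta unless it vanishes identically. This settles~\eqref{it:fnilp}.

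For part~\eqref{it:dergeod}, the point is that the geodesic flow is precisely the Hamiltonian flow of $\cH$, so along a trajectory $(\gamma(s),p(s))$ one has Hamilton's equations $\dot x_j=\partial\cH/\partial p_j$ and $\dot p_j=-\partial\cH/\partial x_j$. Differentiating $f$ along the trajectory by the chain rule gives
\begin{equation*}
\frac{d}{ds}f(\gamma(s),p(s))=\sum_j\Big(\frac{\partial f}{\partial x_j}\dot x_j+\frac{\partial f}{\partial p_j}\dot p_j\Big)
=\sum_j\Big(\frac{\partial f}{\partial x_j}\frac{\partial\cH}{\partial p_j}-\frac{\partial f}{\partial p_j}\frac{\partial\cH}{\partial x_j}\Big)=\{\cH,f\},
\end{equation*}
all evaluated at $(\gamma(s),p(s))$, which is exactly the asserted identity.

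Neither part presents a genuine obstacle; this is a routine verification rather than a theorem with a hard core. The only things to be careful about are bookkeeping conventions: fixing the sign convention for the Poisson bracket so that it is consistent with Hamilton's equations, and recalling that the geodesic flow on $T^*M$ coincides with the Hamiltonian flow of $\cH=\tfrac12 g^{ij}p_ip_j$ (which encodes the geodesic equation via the Legendre transform). If anything requires a word of justification, it is this last identification, but it is standard and the statement of Fact~\ref{f:fact1} presupposes it by phrasing~\eqref{it:dergeod} in terms of trajectories of the geodesic flow.
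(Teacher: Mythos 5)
Your proof is correct, and it is exactly the routine canonical-coordinate verification the paper has in mind: the paper states Fact~\ref{f:fact1} without proof, as ``well known (and easy to check).'' Your degree bookkeeping in part~(\ref{it:fnilp}) and your use of Hamilton's equations plus the chain rule in part~(\ref{it:dergeod}), with the sign convention for $\{\cH,\cdot\}$ fixed consistently with the paper's usage, is the standard check being alluded to.
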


We need the following lemma.

\begin{lemma} \label{l:findim}
 Let $(M,g)$ be a connected $C^\infty$ Riemannian manifold. In the notation of Section~\ref{s:intro}, every subspace $\sL^d_k(M) \subset C^\infty(T^*M), \; d \ge 0$, $k \ge 1$, is finite-dimensional.
\end{lemma}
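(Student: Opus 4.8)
The plan is to argue by induction on $k$, with the rank--nullity relation as the engine. Before starting, I would record that $\sL_k^d(M)$ really is a linear subspace of $C^\infty(T^*M)$: the homogeneous polynomials of degree $d$ in the momenta together with the zero function form a vector space, and on this space the operator $f \mapsto H^k f$ is linear (it is an iterated Poisson bracket with the fixed Hamiltonian $\cH$), so $\sL_k^d(M)$ is exactly its kernel. For the base case $k=1$ we have $\sL_1^d(M) = \sK^d(M)$, the space of homogeneous degree-$d$ polynomial in momenta integrals, which is finite-dimensional by \cite[Theorem 4.3]{Thompson} (and by \cite[Proof of Theorem~1]{KM} in the $C^2$ setting).

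For the inductive step, I would assume that $\sL_{k-1}^d(M)$ is finite-dimensional for every $d \ge 0$ and consider the linear map
\begin{equation*}
 H^1 = \{\cH, \cdot\} \colon \sL_k^d(M) \longrightarrow C^\infty(T^*M).
\end{equation*}
First I would identify its kernel: if $f \in \sL_k^d(M)$ satisfies $H^1 f = 0$, then $f$ is an integral, hence $f \in \sK^d(M) = \sL_1^d(M)$; conversely $\sL_1^d(M) \subseteq \Ker\bigl(H^1|_{\sL_k^d}\bigr)$, so the kernel equals $\sL_1^d(M)$, which is finite-dimensional by the base case. Next I would locate the image: by Fact~\ref{f:fact1}\eqref{it:fnilp}, for $f \in \sL_k^d(M)$ the function $H^1 f$ is either zero or a homogeneous polynomial of degree $d+1$ in the momenta, and since $H^{k-1}(H^1 f) = H^k f = 0$, it lies in $\sL_{k-1}^{d+1}(M)$. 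Thus $\Im\bigl(H^1|_{\sL_k^d}\bigr) \subseteq \sL_{k-1}^{d+1}(M)$, which is finite-dimensional by the induction hypothesis.

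Since both the kernel and the image of $H^1|_{\sL_k^d}$ are finite-dimensional, so is $\sL_k^d(M)$, with
\begin{equation*}
 \dim \sL_k^d(M) = \dim \Ker\bigl(H^1|_{\sL_k^d}\bigr) + \dim \Im\bigl(H^1|_{\sL_k^d}\bigr) \le \dim \sL_1^d(M) + \dim \sL_{k-1}^{d+1}(M) < \infty,
\end{equation*}
which closes the induction. I do not expect a genuine obstacle here: the only points requiring care are that $\sL_k^d(M)$ be a vector space (so that the dimension count applies even without knowing a priori that the domain is finite-dimensional) and that the image land in $\sL_{k-1}^{d+1}(M)$ rather than merely in the space of degree-$(d+1)$ polynomials; both follow at once from the linearity of $H^k$ together with the factorisation $H^k = H^{k-1} \circ H^1$. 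The substantive input is external, namely the finite-dimensionality of $\sK^d(M)$ furnished by \cite{Thompson, KM}, on which the whole induction rests.
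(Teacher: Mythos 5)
Your proof is correct and follows essentially the same route as the paper: induction on $k$ anchored by Thompson's finite-dimensionality of $\sK^d(M)$, with the linear map $\{\cH,\cdot\}\colon \sL^d_k(M)\to \sL^{d+1}_{k-1}(M)$ (kernel $\sK^d(M)$, image finite-dimensional by the induction hypothesis) doing the work. The paper packages the dimension count slightly differently --- it chooses a basis $\{\cH,\phi_1\},\dots,\{\cH,\phi_N\}$ of $\{\cH, C^\infty(T^*M)\}\cap \sL^{d+1}_k(M)$ and writes $\sL^d_{k+1}(M)=\sL^d_k(M)+\Span(\phi_1,\dots,\phi_N)$, while you invoke rank--nullity directly --- but this is presentation, not substance, and your explicit attention to $\sL^d_k(M)$ being a vector space and to the image landing in $\sL^{d+1}_{k-1}(M)$ is if anything a touch more careful.
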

\begin{proof}
 For $k=1$ we have $\sL^d_k(M)=\sK^d(M)$, and the claim follows from the fact that the space of Killing tensor fields of any rank on a connected Riemannian manifold is finite-dimensional~\cite[Theorem~4.3]{Thompson}. Suppose the claim is already established for some $k \ge 1$ and all $d \ge 0$. Then the subspace $\{\cH, C^\infty(T^*M) \} \cap \sL^{d+1}_k(M)$ is finite dimensional, and hence has a basis of the form $\{\{\cH, \phi_1\}, \dots , \{\cH, \phi_N\}\}$ for some $\phi_1, \dots, \phi_N \in C^\infty(T^*M)$. Then $\sL^d_{k+1}(M)=\sL^d_{k}(M) + \Span(\phi_1, \dots, \phi_N)$.
\end{proof}

\begin{proof}[Proof of Theorem~\ref{t:decompo}]
In the assumptions and notation of Theorem~\ref{t:decompo}, let $m_i=\dim M_i,\, i=1,2$, and let $(x^i, p_i), \, i=1, \dots, m_1$, be local coordinates on $T^*M_1$, with $p_i$ the momenta, and let $(y^j, q_j)\, j=1, \dots, m_2$, be local coordinates on $T^*M_2$, with $q_j$ the momenta.

Let $\overline{F}$ be an integral on $\overline{M}$ which is a homogeneous polynomial of degree $d$ in the momenta $(p_i, q_j)$. Consider the decomposition of $\overline{F}$ into the components $S_k(x,y,p,q), \, k=0, \dots, d$, each of which is homogeneous by both $p_i$ and $q_j$:
\begin{equation}\label{eq:KonMbar} 
 f(x,y,p,q)= \sum_{\ell=0}^d S_\ell(x,y,p,q),
\end{equation}
where $ \deg_p S_\ell(x,y,p,q)=\ell$ and $\deg_q S_\ell(x,y,p,q)=d-\ell$.

The Hamiltonian on $\overline{M}$ is given by $\overline{\cH} = \cH_1 + \cH_2$, where $\cH_i, \, i=1,2$, is the Hamiltonian on $M_i$. Separating the components homogeneous in $p_i$ and $q_j$ in the equation  $\{\cH_1+\cH_2,\overline{F}\}=0$ we obtain
 \begin{equation}\label{eq:hompq}
 \begin{array}{rcl}
 \{ \cH_2, S_0(x,y,p, q)\} & =& 0, \\
 \{ \cH_1, S_0(x,y,p, q)\} + \{\cH_2 , S_1(x,y,p, q)\} &=& 0, \\
 &\vdots& \\
 \{ \cH_1, S_{d-1}(x,y,p, q)\} + \{\cH_2, S_d(x,y,p,q)\} &=& 0, \\
 \{ \cH_1 , S_d(x,y,p, q)\}& =& 0
 \end{array}
 \end{equation}
(note that $S_0$ does not depend on $p$, and $S_d$ does not depend on $q$).

From the first equation of~\eqref{eq:hompq} we obtain that for every fixed $(x,p) \in T^*M_1$, the function $S_0(x,y,p, q)$ belongs to $\sL^d_1(M_2) $. As the space $\sL^d_1(M_2)$ is finite dimensional by Lemma~\ref{l:findim}, we get $S_0(x,y,p,q) \in (C^\infty(T^*M_1))^0 \otimes \sL^d_1(M_2)$, where we denote by $(C^\infty(T^*M_i))^k$ the space of smooth functions on $M_i, \, i=1,2$, which are homogeneous of degree $k$ in the momenta.

Acting on the second equation of~\eqref{eq:hompq} by $\{\cH_2, \ . \ \}$ and taking into account the first equation we obtain that for every fixed $(x,p) \in T^*M_1$, the function $S_1(x,y,p,q)$ belongs to $\sL^{d-1}_2(M_2)$, and hence Lemma~\ref{l:findim} implies that $S_1(x,y,p, q) \in (C^\infty(T^*M_1))^1 \otimes \sL^{d-1}_2(M_2)$.

Repeating this argument another $d-1$ times we obtain $S_k(x,y,p, q) \in (C^\infty(T^*M_1))^k \otimes \sL^{d-k}_{k+1}(M_2)$, for $k=0, \dots, d$. Interchanging the roles of $M_1$ and $M_2$ we get $S_k(x,y,p, q) \in \sL^k_{d-k+1}(M_1) \otimes (C^\infty(T^*M_2))^{d-k}$. As both spaces $\sL^{d-k}_{k+1}(M_2)$ and $\sL^k_{d-k+1}(M_1)$ are finite dimensional by Lemma~\ref{l:findim}, we obtain $S_k(x,y,p, q)
\in \sL^k_{d-k+1}(M_1) \otimes \sL^{d-k}_{k+1}(M_2)$, for $k=0, \dots, d$ (cf. the argument in the proof of Corollary~\ref{cor:2}).

Now by Fact~\ref{f:fact1}\eqref{it:fnilp}, the operator $H_1=\{\cH_1, \ . \ \}$ (defined by~\eqref{eq:H}) acting on the finite dimensional space $\oplus_{k=0}^d \sL^k_{d-k+1}(M_1)$ shifts the degree of every homogeneous component in $p$ by $1$, and hence is nilpotent. We can choose a basis for $\oplus_{k=0}^d \sL^k_{d-k+1}(M_1)$ consisting of elements homogeneous in $p$ relative to which the matrix of $H_1$ has a Jordan form. More precisely, we can decompose the space $\oplus_{k=0}^d \sL^k_{d-k+1}(M_1)$ into the direct sum of linear subspaces $W^\la$ such that $W^\la=\Span(f_1^\la, H^1_1 f_1^\la, \dots, H_1^{m_\la-1}f_1^\la)$, where $f_1^\la \in \sL^{k_\la}_{m_\la}(M_1)$, with $k_\la, m_\la \le d$. Similarly, we decompose $\oplus_{k=0}^{d-k} \sL^k_{k+1}(M_2)$ into the direct sum of linear subspaces $V^\mu$ such that $V^\mu=\Span(f_2^\mu, H_2^1 f_2^\mu, \dots, H_2^{m_\mu-1}f_2^\mu)$, where $f_2^\mu \in \sL^{k_\mu}_{m_\mu}(M_2)$, with $k_\mu, m_\mu$ $\le d$. We now have $f \in \oplus_{\la,\mu} W^\la \otimes V^\mu$, and for any $(\la, \mu)$, the space $W^\la \otimes V^\mu$ is invariant relative to both $H_1$ and $H_2$. The actions of $H_1$ on $W^\la$ can be identified with the action of the operator $\frac{d}{du}$ on the space of polynomial in $u$ of degree $m_{\la}-1$, with the element $f_1^\la$ corresponding to $u^m_{\la}-1$. Similarly, the actions of $H_2$ on $V^\mu$ can be identified with the action of the operator $\frac{d}{dv}$ on the space of polynomial in $v$ of degree $m_{\mu}-1$, with the element $f_2^\mu$ corresponding to $v^m_{\mu}-1$. Then the action of $H_1+H_2$ on $W^\la \otimes V^\mu$ corresponds to $\frac{d}{du} + \frac{d}{dv}$, and so its kernel is spanned by the powers of $u-v$ (up to and including the power $\min(m_\la-1,m_\mu-1)$), which correspond under our identification to the expressions given in~\eqref{eq:fM1M2}. This completes the proof of Theorem~\ref{t:decompo}.
\end{proof}

\section{An example of a complete product manifold with an irreducible Killing tensor}
\label{ex:noncomp}

Theorem~\ref{thm:2} tells that any Killing tensor on the Riemannian product of two manifolds one of which is compact is reducible. 
One may wonder whether the claim still holds if one relaxes the assumption of compactness to a weaker assumption of completeness.

Of course, one can easily construct irreducible Killing tensors on $(\mathbb{R}^n= \mathbb{R}^k\times \mathbb{R}^{n-k},g^n_{euclidean}= g^k_{euclidean} +g^{n-k}_{euclidean} ) $. For example, on $\mathbb{R}^2$ with the Cartesian coordinates $(x,y)$, the Killing vector field $y\tfrac{\partial }{\partial x}- x\tfrac{\partial }{\partial y})$ is irreducible.

The goal of this section is to give an example of a complete Riemannian product manifold, whose factors contain no locally reducible open domains, which admits a Killing tensor not being the sum of the symmetrised tensor products of Killing tensors on the factors.

We start with a metric on $\br^3$ given, relative to the cylindrical coordinates $(r,\theta,z), \; r \ge 0$, $\theta \in [0,2\pi), z \in \br$, by
 \begin{equation}\label{eq:ex}
 ds^2=dr^2 + \frac{r^2}{1+r^2} \, d\theta^2 + \frac{1}{1+r^2} \, dz^2.
 \end{equation}
Note that for small $r>0$, we have $ds^2=dr^2 + (r^2+o(r^4)) d\theta^2 + (1-r^2+o(r^4)) dz^2$, so that the metric~\eqref{eq:ex} is well defined and smooth at $r = 0$.

The vector fields $\tfrac{\partial}{ \partial\theta}$ and $\tfrac{\partial }{\partial z}$ are clearly Killing. The dual covectors are $\omega_1=\frac{r^2}{1+r^2} d\theta$ and $\omega_2=\frac{1}{1+r^2} dz$, respectively. Consider the covector field $\Omega=rdr + \frac{2z}{1+r^2}dz$ (note that it is well-defined and smooth relative to our cylindrical coordinates). The symmetrised covariant derivative of $\Omega$ is the covariant tensor of type $(0,2)$ given by $\nabla \Omega= dr^2 +\frac{r^2}{(r^2 + 1)^2}d\theta^2 + \frac{r^2 + 2}{(r^2 + 1)^2}dz^2$. This tensor is Killing, as one easily sees that $\nabla \Omega = ds^2 - \omega_1^2 + \omega_2^2$ (note that $\Omega$ by itself is not Killing, as $\nabla \Omega \ne 0$).

A direct computation shows that at a generic point $x \in \br^3$, there is no nonzero vector $X \in T_x \br^3$ such that the sectional curvature is zero along all the two-planes containing $X$. This shows that the metric $ds^2$ given by~\eqref{eq:ex} is not a product metric, even locally. To see that $ds^2$ is complete we consider a $C^1$ ray $c(t) = (r(t),\theta(t),z(t)), t \ge 0$, which leaves any compact subset of $\br^3$. If $r(t)$ is unbounded, then it clearly has infinite length, as $\int_{0}^{t} \|c'(t)\|dt \ge |r(t)-r(0)|$. If $r(t)$ is bounded by some constant $C > 0$, then $z(t)$ must be unbounded, and $c(t)$ also has infinite length, as $\int_{0}^{t} \|c'(t)\|dt \ge (1+C^2)^{-1}|z(t)-z(0)|$.

We now take the product of the two copies of the metric~\eqref{eq:ex}: $dS^2 = dr_1^2 + \frac{r_1^2}{1+r_1^2} \, d\theta_1^2 + \frac{1}{1+r_1^2} \, dz_1^2+dr_2^2 + \frac{r_2^2}{1+r_2^2} \, d\theta_2^2 + \frac{1}{1+r_2^2} \, dz_2^2$, the covector fields $\Omega_1=r_1dr_1 + \frac{2z_1}{1+r_1^2}dz_1$ and $\Omega_2=r_2dr_2 + \frac{2z_2}{1+r_2^2}dz_2$, and their symmetrised covariant derivatives $\nabla \Omega_1$ and $\nabla \Omega_2$. Then by the construction in formula~\eqref{eq:fM1M2} (or by a direct calculation) one sees that the $(0,3)$ tensor $K=\Omega_1 \odot \nabla \Omega_2 - \Omega_2 \odot \nabla \Omega_1$ is Killing for the product metric $dS^2$. The fact that $K$ is not the sum of the products of the Killing tensors on the factors follows, as the $(0,1)$ tensors $\Omega_1$ and $\Omega_2$ are not Killing and therefore the Poisson bracket of the corresponding integral with $\cH_1$ is not zero.

\subsection*{Data availability and conflict of interest statements} All data generated or analysed during this study are included in this published article. On behalf of all authors, the corresponding author states that there is no conflict of interest.

\printbibliography

\end{document}